\colorlet{darkblue}{blue!50!black}
\newtheorem{theorem}{Theorem}[section]
\newtheorem{remark}[theorem]{Remark}
\let\originalleft\left
\let\originalright\right
\renewcommand{\left}{\mathopen{}\mathclose\bgroup\originalleft}
\renewcommand{\right}{\aftergroup\egroup\originalright}
\def\C{\mathrm{C}}
\def\E{\mathbb{E}}
\def\H{\mathbb{H}}
\newcommand{\Addresses}{{
		\footnote{
			
			\noindent \textsuperscript{1}Department of Mathematics, Indian Institute of Technology Roorkee-IIT Roorkee,
			Haridwar Highway, Roorkee, Uttarakhand 247667, India.\par\nopagebreak
			\noindent  \textit{e-mail:} \texttt{Manil T. Mohan: maniltmohan@ma.iitr.ac.in, maniltmohan@gmail.com.}
			
			\textsuperscript{2}Department of Mathematics, Periyar University, Salem 636 011, Tamil Nadu, India.\par\nopagebreak
			\noindent  \textit{e-mail:} \texttt{S. Pardeep: pradeepkumar01798@gmail.com}
			\noindent  \textit{e-mail:} \texttt{S. Sankar: subusankar27@gmail.com}
				\noindent  \textit{e-mail:} \texttt{S. Karthikeyan: karthi@periyauniversity.ac.in, skkmathpu@gmail.com}
			
			\noindent \textsuperscript{*}Corresponding author.
			
			\textit{Key words:} Semilinear SPDE; L\'evy noise; Blow-up; Concavity method.

			Mathematics Subject Classification (2020): Primary: 60H15,  Secondary:  35R60;   35B44; 
			
}}}
\begin{document}
	
	
	\title[Blow-up of stochastic  semilinear parabolic equations]{Blow-up of  stochastic semilinear  parabolic equations driven by L\'evy noise
	\Addresses}
	\author[M. T. Mohan, S. Pradeep, S. Sankar and S. Karthikeyan]	{Manil T. Mohan\textsuperscript{1*}, S. Pradeep\textsuperscript{2}, S. Sankar\textsuperscript{2} and S. Karthikeyan\textsuperscript{2}}

	\maketitle
	
\begin{abstract}
The blow-up phenomena of stochastic  semilinear parabolic equations with additive as well as linear multiplicative L\'evy noises are investigated  in this work. By suitably modifying the concavity method in the stochastic context, we establish the blow-up phenomena of such systems defined on bounded domains.
\end{abstract}
	
\section{Introduction}\label{sec1}\setcounter{equation}{0}
In the past few years, researchers from a wide range of scientific fields, including theoretical physics, fluid mechanics, geophysics, pure and applied mathematics, and others, have paid significant attention to stochastic partial differential equations (SPDEs) because of their ability to model complex phenomena driven by random fluctuations.  A lot of interesting challenges, such as the well-posed problems, blow-up phenomena, asymptotic behaviors like ergodicity, random attractor, stability, etc. have been examined for several types of SPDEs. Finite-time blow-up is an important phenomenon that can occur in mathematical models and it  refers to a situation where a solution to a differential equation grows  infinite within a finite interval of time. As this behavior is often unexpected and inevitable, understanding finite-time blow-up is crucial for analyzing the behavior of mathematical models and forecasting their long-term dynamics. 
 
 The finite-time blow-up phenomenon in deterministic PDEs has been extensively studied by a good number of authors (cf. \cite{Hu2018}). The methods used to obtain  the finite-time blow-up phenomenon in deterministic PDEs defined on bounded domains  include Kaplan’s first eigenvalue method, concavity method and comparison method, which are detailed in \cite[Chapter 5]{Hu2018}. As far as the stochastic counterpart is concerned, there are only a limited results available on the existence of blow-up solutions.  In SPDEs, the system is completely determined by the initial data and  the effect of noises on the systems. Consider the following SPDEs:
   \begin{align}\label{b1}
   \left\{
   \begin{aligned}
   du(t,x) &= (\Delta u + f (u))dt + \sigma(u)dW(t), t>0,\ x\in \mathcal{O}, \\
   u(x, 0) &= u_{0} (x) \geq 0,\ x \in \mathcal{O}, \\
   u(x,t)&=0,\ t>0,\ x\in \partial \mathcal{O},
   \end{aligned}
   \right.
   \end{align}
   where $\mathcal{O}\subset\mathbb{R}^d$ is a bounded domain with smooth boundary $\partial\mathcal{O}$ and $W(\cdot)$ is a Brownian motion. Under suitable assumptions of $f(\cdot)$ and $\sigma(\cdot)$, the existence of strong solutions (in the probabilistic sense) of \eqref{b1} is studied in \cite{PLC}. Under the monotonicity and local Lipschitz assumptions on $f$, Da Prato and Zabczyk \cite{DA1992} proved that the solution of the equation \eqref{b1} exists globally with additive infinite dimensional noise (here $\sigma$ is constant). Manthey and Zausinger \cite{MZ1999} discussed the existence of a unique solution of the equation \eqref{b1}, with $\sigma$ satisfying a global Lipschitz condition on bounded domain. Dozzi and L\'opez-Mimbela \cite{Doz2010} studied the finite-time blow-up solution of the equation \eqref{b1} with $\sigma(u) = u$, $f (u) \geq u^{1+\alpha},\ \alpha > 0$ and large enough initial data, and if $f (u) \leq u^{1+\beta}$, $\beta>0$ is a constant and the initial data is small enough, then they provided a sufficient condition for the global existence of the solution to the equation \eqref{b1}. Niu and Xin \cite{Niu2012} was interested to investigate whether the explosion time prolong the life span of an explosive nonlinear SPDE by adding another independent noise to the equation \eqref{b1}.  Chow \cite{Chow2009,Chow2011} considered the blow-up phenomenon under the condition that $\sigma(u)$ does not satisfy the global Lipschitz condition.  Lv and Duan \cite{Lv2015} described the effects of the interplay between $f$ and $\sigma$ on the finite-time blow-up of the equation \eqref{b1}. 
  
 However, there are limited results available in the literature for the blow-up of solution of SPDEs driven by L\'evy noise.   By using a Lyapunov technique,  Xing and Li \cite{xing}  investigated the explosive solutions for a class of stochastic differential equations driven by Lévy processes.   Bao and Yuan \cite{Bao2016} and Li et al. \cite{Li2017} obtained the existence of local solutions of the equation of the form \eqref{b1} with jump process and L\'evy process, respectively. Li \cite{li2018} extended the above results for a  class of semilinear stochastic delayed reaction–diffusion equations with Lévy noise. A recent paper by Liang et al., \cite{liang} investigated the explosive solution of non-local SPDEs driven by Lévy noise.  
   
In the literature, the method used to prove the finite time blow-up of SPDEs on bounded domains is stochastic Kaplan's first eigenvalue method (\cite{Bao2016,PLC,Chow2011,Lv2015}).    Recently, Lv and Wei \cite{GLJW}  studied the  blow-up phenomenon of a nonlinear parabolic SPDE of the following form using the stochastic concavity method:
  \begin{align}\label{1p2}
  	\left\{
  	\begin{aligned}
  		du(x,t)&=[\Delta u(x,t)+|u(x,t)|^{p-1}u(x,t)]dt+\sigma(x,t)dW(t),\ x\in \mathcal{O},\ t>0,\\
  		u(x,t)&=0, \ x\in\partial \mathcal{O}, t\geq 0,\\ 
  		u(x,0)&=u_0(x),\ x\in \mathcal{O},
  	\end{aligned}
  	\right.
  \end{align}
  where $W(\cdot)$ is a one-dimensional Brownian motion.  The concavity method is not only applicable to second order deterministic  parabolic equations but also extended to a wide range of other types of deterministic as well as stochastic  evolution equations.

   Inspired by the above facts, in the present paper, our aim of this work is to study the blow-up phenomena by using the concavity method  for the following SPDE: 
   \begin{equation}\label{11}
	\left\{
   \begin{aligned}
	du(x,t)&=[\alpha\Delta u(x,t)+\beta|u(x,t)|^{m-1}u(x,t)]dt+\sigma(u(x,t),t)dW(t)\\&\quad+\int_{\mathrm{Z}}\eta(u(x,t),t,z)\widetilde{\pi}(dt,dz),\ x\in \mathcal{O},\ t>0,\\
	u(x,t)&=0, \ x\in\partial \mathcal{O}, t\geq 0,\\ 
	u(x,0)&=u_0(x),\ x\in \mathcal{O},
   \end{aligned}
\right.
\end{equation}
where $\mathcal{O}\subset\mathbb{R}^d$ be a bounded domain with smooth boundary $\partial\mathcal{O}$. Here, $\alpha,\beta>0$ and $m\geq 1$. Let $(\Omega,\mathscr{F},\{\mathscr{F}_t\}_{t\geq 0},\mathbb{P})$ be a filtered probability space satisfying the usual hypotheses, that is,  $\{\mathscr{F}_t\}_{t\geq 0}$ is complete and right continuous.  Let $\{W(t)\}_{t\geq 0}$ be a standard one-dimensional Brownian motion defined on the probability space $(\Omega,\mathscr{F},\{\mathscr{F}_t\}_{t\geq 0},\mathbb{P})$. Let $(\mathrm{Z},\mathcal{B}(\mathrm{Z}))$ be a locally compact Polish space. For any $B\in\mathcal{B}(\mathrm{Z})$ and $t\in[0,T]$, let  $\pi(t,B)$ be a Poisson random measure with intensity (L\'evy) measure $\lambda(B)$ and $\mathbb{E}[\pi(t,B)]=t\lambda(B)$. Moreover,   $\widetilde{\pi}(t,B)=\pi(t,B)-t\lambda(B)$ is the compensated Poisson random measure.  For example (\cite[Section 1.2.4]{DA}), let us take fix $\mathrm{Z}=\mathbb{R}^d$ and 	let $\lambda$ be a Borel measure defined on $\mathbb{R}^d$. We say that $\lambda$ is a L\'evy measure if  $\lambda(\{0\})=0$ and 
\begin{align}\label{LM}
	\int_{\mathbb{R}^d}\big(|y|^2\wedge 1\big)\lambda(d y)<\infty. 
\end{align}
Note that every L\'evy measure on $\mathbb{R}^d$ is $\sigma$-finite. In the rotationally invariant case,  the L\'evy measure can be defined in the following form:
\begin{align*}
	\lambda(d x)=\frac{c}{|x|^{d+\alpha}}d x, \ \text{ for  }\ \alpha\in(0,2),
\end{align*} for some constant $c$. Moreover, $\sigma(\cdot,\cdot)$ and $\eta(\cdot,\cdot,\cdot)$ denote the noise coefficients. In this work, we consider both additive and linear multiplicative noises. 

The Lebesgue spaces are denoted by $\mathrm{L}^{p}(\mathcal{O})$ for $p\in [1,\infty]$, and the norm in $\mathrm{L}^p(\mathcal{O})$ is denoted by $\|\cdot\|_{\mathrm{L}^p}$ and for $p=2$, the inner product in $\mathrm{L}^2(\mathcal{O})$ is represented by $(\cdot,\cdot)$.  Let $\C_{0}^{\infty}(\mathcal{O})$ denote the space of all infinite times differentiable functions having compact support in $\mathcal{O}$. We denote the Sobolev spaces by $\mathrm{H}^{k}(\mathcal{O}),$ for $k\in\mathbb{N}$. Let $\mathrm{H}_0^1(\mathcal{O})$ denote the closure of $\C_{0}^{\infty}(\mathcal{O})$ in the $\H^1$-norm $\|\cdot\|_{\mathrm{H}^1}:=\sqrt{\|\cdot\|_{\mathrm{L}^2}^2+\|\nabla\cdot\|_{\mathrm{L}^2}^2}$. As we are working in a bounded domain, by using the Poincar\'e inequality, we infer that the norm $\sqrt{\|\cdot\|_{\mathrm{L}^2}^2+\|\nabla\cdot\|_{\mathrm{L}^2}^2}$ is equivalent to the seminorm $\|\nabla\cdot\|_{\mathrm{L}^2}$ so that $\|\nabla\cdot\|_{\mathrm{L}^2}$ defines a norm on $\mathrm{H}_0^1(\mathcal{O})$. 

 In the additive noise case, if  $u_0\in\mathrm{L}^2(\Omega;\mathrm{H}_0^1(\mathcal{O}))$, $\sigma\in\mathrm{L}^2(\Omega;\mathrm{L}^{2}((0,T);\mathrm{H}^1(\mathcal{O})))$ and  $\eta\in\mathrm{L}^2(\Omega;\mathrm{L}^{2}((0,T)\times\mathrm{Z});\mathrm{H}^1(\mathcal{O}))$, that is,  if 
 \begin{align*}
& \mathbb{E}\left[\|u_0\|_{\mathrm{H}_0^1}^2\right]<\infty, \ \mathbb{E}\left[	\int_0^{T}[\|\sigma(t)\|_{\mathrm{L}^2}^2+\|\nabla \sigma(t)\|_{\mathrm{L}^2}^2]dt\right]<\infty, \\ &\mathbb{E}\left[\int_0^T\int_{\mathrm{Z}}[\|\eta(t,z)\|_{\mathrm{L}^2}^2+\|\nabla\eta(t,z)\|_{\mathrm{L}^2}^2]\lambda(dz)dt\right]<\infty,
 \end{align*}
for any $T>0$,   then by using the contraction mapping principle,  the local existence and uniqueness of a pathwise strong solution (in the probabilistic sense) $u\in\mathrm{L}^2(\Omega;\mathrm{L}^{\infty}(0,T^*;\mathrm{H}_0^1(\mathcal{O})),$ $0<T^*<T$,  to the problem \eqref{11} can be established  by using the same techniques as in  the proof  of \cite[Theorem 8.1]{PLC}. Moreover, $u$ has a modification with paths in $\mathrm{D}([0,T^*];\mathrm{H}_0^1(\mathcal{O}))$, $\mathbb{P}$-a.s., where $\mathrm{D}([0,T^*];\mathrm{H}_0^1(\mathcal{O}))$ denotes the space of all c\`adl\`ag  (right continuous  paths with left limits) process from $[0,T^*]$ to $\mathrm{H}_0^1(\mathcal{O})$.  A similar result holds true in the case of linear  multiplicative noise also. 
 
 The method we adopted in this work is motivated from \cite[Theorem 2.1]{GLJW}.   The concavity method in the deterministic case was suitably modified for the stochastic case by the authors in \cite{GLJW} to study  the blow-up problem of \eqref{1p2} perturbed by additive Gaussian noises.   But they were not able to obtain the results for the multiplicative noise case (see \cite[Remark 2.1]{GLJW}).  In this work, we are able to establish a blow-up criteria for the problem \eqref{11} perturbed by 
 \begin{itemize}
 	\item [(i)] additive L\'evy noise (Theorem \ref{thm2.1}),
 	\item [(ii)]  linear multiplicative  L\'evy noise (Theorem \ref{thm3.1}),
 \end{itemize}
by suitably modifying the method developed in \cite{GLJW} for the linear multiplicative noise case.  The main results of this work are given below.

\begin{theorem}[Additive noise case]\label{thm2.1}
	Let 
	\begin{align}\label{2p1}
		\sigma(u,t)=\sigma(x,t)\ \text{ and }\ \eta(u,t,z)=\eta(x,t,z)
	\end{align}
in \eqref{11}.	Assume that $m>1$,  $\alpha,\beta>0$. Let $\sigma(\cdot,t)$ and $\eta(\cdot,t,\cdot)$ be $\mathscr{F}_t$-adapted such that \begin{align*}\sigma&\in\mathrm{L}^2(\Omega;\mathrm{L}^{2}((0,\infty);\mathrm{L}^2(\mathcal{O})))\ \text{ with }\  \int_0^{\infty}\mathbb{E}\left[\|\nabla\sigma(t)\|_{\mathrm{L}^2}^2\right]dt<\infty,\nonumber\\
		\eta&\in\mathrm{L}^2(\Omega;\mathrm{L}^{2}((0,\infty)\times\mathrm{Z});\mathrm{L}^2(\mathcal{O}))\ \text{ with }\  \int_0^{\infty}\mathbb{E}\left[\int_{\mathrm{Z}}\|\nabla\eta(t,z)\|_{\mathrm{L}^2}^2\lambda(dz)\right]dt<\infty,\end{align*}    and the $\mathscr{F}_0$-measurable initial data $u_0$ satisfy 
	\begin{align}\label{12}
		-\frac{\alpha}{2}\mathbb{E}\left[\|\nabla u_0\|_{\mathrm{L}^2}^2\right]+\frac{\beta}{m+1}\mathbb{E}\left[\|u_0\|_{\mathrm{L}^{m+1}}^{m+1}\right]-\frac{\alpha}{2}\int_0^{\infty}\mathbb{E}\left[\|\nabla \sigma(s)\|_{\mathrm{L}^2}^2 +\int_{\mathrm{Z}}\|\nabla\eta(s,z)\|_{\mathrm{L}^2}^2\lambda(dz)\right]ds>0.
	\end{align}
	Then the solution of \eqref{11} with \eqref{2p1} as noise coefficient  must blow-up in a finite time in the sense of mean square, that is, there exists a time $0<T^*<T$ such that 
	\begin{align}
		\lim\limits_{t\uparrow T^*}\mathbb{E}\left[\|u(t)\|_{\mathrm{L}^2}^2\right]=\infty. 
	\end{align}
\end{theorem}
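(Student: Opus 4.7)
The plan is to combine an energy-in-expectation estimate with the It\^o expansion of $F(t):=\mathbb{E}\bigl[\|u(t)\|_{\mathrm{L}^2}^2\bigr]$ to obtain a Bernoulli-type differential inequality $F'(t)\ge c_0\,F(t)^{(m+1)/2}$ which, since $m>1$, forces $F$ to diverge at an explicitly bounded finite time. Introduce the energy
\[
\mathscr{E}(u):=\frac{\alpha}{2}\|\nabla u\|_{\mathrm{L}^2}^2-\frac{\beta}{m+1}\|u\|_{\mathrm{L}^{m+1}}^{m+1},
\]
so that \eqref{12} reads $\mathbb{E}[\mathscr{E}(u_0)]+\frac{\alpha}{2}\int_0^\infty\mathbb{E}\bigl[\|\nabla\sigma\|_{\mathrm{L}^2}^2+\int_{\mathrm{Z}}\|\nabla\eta\|_{\mathrm{L}^2}^2\lambda(dz)\bigr]ds=:-\delta<0$. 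All the It\^o manipulations below are to be performed on a Galerkin approximation $u^n$ (or within a localizing stopping time $\tau_N$) so that the It\^o calculus applied to non-quadratic functionals is rigorous; the limits are then recovered by lower semicontinuity of $\|\nabla\cdot\|_{\mathrm{L}^2}^2$ and $\|\cdot\|_{\mathrm{L}^{m+1}}^{m+1}$.

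\textbf{Propagation of negative energy.} It\^o's formula applied to $\mathscr{E}(u)$ produces three deterministic contributions: the drift $\mathscr{E}'(u)\cdot(\alpha\Delta u+\beta|u|^{m-1}u)=-\|\alpha\Delta u+\beta|u|^{m-1}u\|_{\mathrm{L}^2}^2\le0$; the Gaussian correction $\tfrac12\mathscr{E}''(u)(\sigma,\sigma)=\tfrac{\alpha}{2}\|\nabla\sigma\|_{\mathrm{L}^2}^2-\tfrac{\beta m}{2}\int_{\mathcal{O}}|u|^{m-1}\sigma^2\,dx$; and the L\'evy compensator $\int_{\mathrm{Z}}\bigl\{\tfrac{\alpha}{2}\|\nabla\eta\|_{\mathrm{L}^2}^2-\tfrac{\beta}{m+1}\mathscr{R}(u,\eta)\bigr\}\lambda(dz)$, where $\mathscr{R}(u,\eta):=\|u+\eta\|_{\mathrm{L}^{m+1}}^{m+1}-\|u\|_{\mathrm{L}^{m+1}}^{m+1}-(m+1)(|u|^{m-1}u,\eta)\ge0$ pointwise by convexity of $r\mapsto|r|^{m+1}$ for $m\ge1$. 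Discarding the non-positive terms and taking expectations kills the martingale parts and delivers
\[
\mathbb{E}[\mathscr{E}(u(t))]\le\mathbb{E}[\mathscr{E}(u_0)]+\frac{\alpha}{2}\int_0^t\mathbb{E}\Bigl[\|\nabla\sigma\|_{\mathrm{L}^2}^2+\int_{\mathrm{Z}}\|\nabla\eta\|_{\mathrm{L}^2}^2\lambda(dz)\Bigr]ds\le-\delta
\]
for every $t$ in the existence interval.

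\textbf{Blow-up inequality.} It\^o's formula for $\|u(t)\|_{\mathrm{L}^2}^2$ followed by expectation gives the absolutely continuous identity $F'(t)=-2\alpha\mathbb{E}[\|\nabla u\|_{\mathrm{L}^2}^2]+2\beta\mathbb{E}[\|u\|_{\mathrm{L}^{m+1}}^{m+1}]+\mathbb{E}\bigl[\|\sigma\|_{\mathrm{L}^2}^2+\int_{\mathrm{Z}}\|\eta\|_{\mathrm{L}^2}^2\lambda(dz)\bigr]$. Using the algebraic identity $-2\alpha\|\nabla u\|_{\mathrm{L}^2}^2+2\beta\|u\|_{\mathrm{L}^{m+1}}^{m+1}=-4\mathscr{E}(u)+\frac{2\beta(m-1)}{m+1}\|u\|_{\mathrm{L}^{m+1}}^{m+1}$, dropping the non-negative noise terms, and invoking the energy bound yields $F'(t)\ge4\delta+\frac{2\beta(m-1)}{m+1}\mathbb{E}[\|u\|_{\mathrm{L}^{m+1}}^{m+1}]$. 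H\"older's inequality in $x$ combined with Jensen's inequality in $\omega$ gives $\mathbb{E}[\|u\|_{\mathrm{L}^{m+1}}^{m+1}]\ge|\mathcal{O}|^{-(m-1)/2}F(t)^{(m+1)/2}$, so
\[
F'(t)\ge c_0\,F(t)^{(m+1)/2},\qquad c_0:=\frac{2\beta(m-1)}{(m+1)\,|\mathcal{O}|^{(m-1)/2}}>0.
\]
Since \eqref{12} forces $u_0\not\equiv0$ and hence $F(0)>0$, ODE comparison gives $F(t)^{-(m-1)/2}\le F(0)^{-(m-1)/2}-\frac{c_0(m-1)}{2}t$, so $F$ diverges at some finite $T^*\le\frac{2}{c_0(m-1)F(0)^{(m-1)/2}}$. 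Equivalently, setting $L(t):=\int_0^tF(s)\,ds+A$ for a suitably large constant $A>0$, the above can be recast as a Levine-type concavity statement $LL''\ge(1+\gamma)(L')^2$ with $\gamma=(m-1)/2$, whence $L^{-\gamma}$ is concave with strictly negative initial derivative and must vanish in finite time.

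\textbf{Main obstacle.} The principal technical hurdle is justifying It\^o's formula for the non-quadratic functional $\mathscr{E}$, since a priori only $u\in\mathrm{L}^\infty(0,T^*;\mathrm{H}_0^1(\mathcal{O}))$ is known, while the chain rule formally needs $\Delta u$ and $|u|^{m-1}u$ in $\mathrm{L}^2(\mathcal{O})$. The standard remedy is to perform every computation on Galerkin truncations $u^n$ in a finite-dimensional smooth subspace and then pass to the limit using lower semicontinuity of the two norms under weak convergence. The second delicate point, peculiar to the L\'evy setting, is the sign of $\mathscr{R}(u,\eta)$: convexity of $r\mapsto|r|^{m+1}$ for $m\ge1$ is precisely what makes the jump compensator admit the favourable one-sided bound used above, and it is this structural feature that permits the extension of the concavity method of \cite{GLJW} from Gaussian to L\'evy noise.
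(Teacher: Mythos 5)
Your proposal is correct, and it reaches the conclusion by a genuinely different closing argument than the paper. Both proofs rest on the same two It\^o computations: the identity \eqref{14} for $F(t)=\mathbb{E}[\|u(t)\|_{\mathrm{L}^2}^2]$, and the second--order energy identity \eqref{216}, which in your notation is exactly the statement $\mathbb{E}[\mathscr{E}(u(t))]\le \mathbb{E}[\mathscr{E}(u_0)]+\frac{\alpha}{2}\int_0^t\mathbb{E}\bigl[\|\nabla\sigma\|_{\mathrm{L}^2}^2+\int_{\mathrm{Z}}\|\nabla\eta\|_{\mathrm{L}^2}^2\lambda(dz)\bigr]ds\le-\delta$ (your sign conventions, your treatment of the Gaussian correction, and your use of convexity of $r\mapsto|r|^{m+1}$ for the jump compensator all match the paper's Taylor-remainder term \eqref{114}, which the paper likewise keeps with a favourable sign). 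Where you diverge is in how this is converted into blow-up: the paper runs the Levine concavity machine, setting $I(t)=\int_0^tF(s)\,ds+K$, estimating $(I'(t))^2$ by Cauchy--Schwarz against $\int_0^t\mathbb{E}\|\alpha\Delta u+\beta|u|^{m-1}u\|_{\mathrm{L}^2}^2\,ds$, and choosing $\varepsilon,\delta,K$ so that $I''I-(1+\delta)(I')^2>0$ as in \eqref{118}; you instead feed the algebraic identity $-2\alpha\|\nabla u\|_{\mathrm{L}^2}^2+2\beta\|u\|_{\mathrm{L}^{m+1}}^{m+1}=-4\mathscr{E}(u)+\tfrac{2\beta(m-1)}{m+1}\|u\|_{\mathrm{L}^{m+1}}^{m+1}$ into $F'$, and close with H\"older on the bounded domain plus Jensen in $\omega$ to get the Bernoulli inequality $F'\ge c_0F^{(m+1)/2}$. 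Your route is more elementary (no auxiliary functional, no cross-term estimate, no large constant $K$) and buys an explicit upper bound on the blow-up time in terms of $F(0)$, $\beta$, $m$ and $|\mathcal{O}|$; its price is a more essential use of $|\mathcal{O}|<\infty$ through the constant $|\mathcal{O}|^{-(m-1)/2}$, and the need to observe (as you do) that \eqref{12} forces $F(0)>0$. Two minor remarks: your closing aside that the inequality ``can be recast as $LL''\ge(1+\gamma)(L')^2$'' is not actually justified by what precedes it and should be dropped (the direct ODE comparison is already complete); and the regularity issue you flag for applying It\^o's formula to $\mathscr{E}$ is present to exactly the same degree in the paper's own derivation of \eqref{216}, so you are not worse off than the published argument on that point.
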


\begin{theorem}[Linear multiplicative noise case]\label{thm3.1} Let 
	\begin{align}\label{3p1}
		\sigma(u(x,t),t)=\sigma u(x,t)\ \text{ and }\ \eta(u(x,t),t,z)=\eta(z)u(x,t),
	\end{align}
in \eqref{11}, 	where $\sigma$ is a constant and $\eta(\cdot)\geq 0$ is independent of $t$. 
	Assume that $m>1$, $\alpha,\beta>0$ and fix 
	\begin{align}\label{3p2}
		\kappa= \frac{1}{2}\left(\sigma^2+\int_{\mathrm{Z}}\eta^2(z)\lambda(dz)\right)
	\end{align}
	such that 
	\begin{align}\label{3p3}
		0\leq\kappa\leq \alpha\lambda_1,
	\end{align}
	where $\lambda_1>0$ is the first eigenvalue of the Dirichlet Laplacian. Let  the $\mathscr{F}_0$-measurable initial data $u_0$ satisfy 
	\begin{align}\label{3p4}
		\mathbb{E}\left[-\frac{\alpha}{2}\|\nabla u_0\|_{\mathrm{L}^2}^2+\frac{\beta}{m+1}\|u_0\|_{\mathrm{L}^{m+1}}^{m+1}+\frac{\kappa}{m+1}\|u_0\|_{\mathrm{L}^2}^2\right]>0.
	\end{align}
	Then the solution of \eqref{11} with \eqref{3p1} as noise coefficient  must blow-up in a finite time in the sense of mean square.
\end{theorem}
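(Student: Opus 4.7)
The plan is to adapt the stochastic concavity method of \cite{GLJW} by combining an It\^o identity for the mean-square norm with a monotonicity estimate for a shifted energy functional, in order to derive a super-linear ODE inequality that forces finite-time blow-up of $M(t):=\E[\|u(t)\|_{\mathrm{L}^2}^2]$.

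\textbf{Step 1 --- Mean-square identity.} I apply It\^o's formula for c\`adl\`ag semimartingales to $\|u(t)\|_{\mathrm{L}^2}^2$. For the linear multiplicative noise, the continuous It\^o correction yields $\sigma^2\|u\|_{\mathrm{L}^2}^2$, while the Poisson compensator correction yields $\int_{\Z}\eta^2(z)\lambda(\d z)\|u\|_{\mathrm{L}^2}^2$ (from $(1+\eta)^2-1-2\eta=\eta^2$), which by \eqref{3p2} sum to $2\kappa\|u\|_{\mathrm{L}^2}^2$. Taking expectation kills the two martingale contributions and gives
\begin{align*}
M'(t)=-2\alpha\,\E[\|\nabla u(t)\|_{\mathrm{L}^2}^2]+2\beta\,\E[\|u(t)\|_{\mathrm{L}^{m+1}}^{m+1}]+2\kappa\,M(t).
\end{align*}

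\textbf{Step 2 --- Shifted-energy monotonicity.} Motivated by \eqref{3p4}, I set
\begin{align*}
G(t):=\E\!\left[-\tfrac{\alpha}{2}\|\nabla u\|_{\mathrm{L}^2}^2+\tfrac{\beta}{m+1}\|u\|_{\mathrm{L}^{m+1}}^{m+1}+\tfrac{\kappa}{m+1}\|u\|_{\mathrm{L}^2}^2\right],
\end{align*}
so that $G(0)>0$. I will prove $G(t)\geq G(0)$ throughout the life span of $u$. This is done by applying It\^o's formula separately to $\|\nabla u\|_{\mathrm{L}^2}^2$ and (after a standard mollification) to $\|u\|_{\mathrm{L}^{m+1}}^{m+1}$, assembling the drift of $G$, and using three structural facts: (i) the balance $\sigma^2+\int_{\Z}\eta^2\lambda(\d z)=2\kappa$; (ii) the convexity bound $(1+\eta)^{m+1}-1-(m+1)\eta\geq\tfrac{m(m+1)}{2}\eta^2$ for $\eta\geq 0$; (iii) the spectral inequalities $\lambda_1\|u\|_{\mathrm{L}^2}^2\leq\|\nabla u\|_{\mathrm{L}^2}^2$ and $\lambda_1\|\nabla u\|_{\mathrm{L}^2}^2\leq\|\Delta u\|_{\mathrm{L}^2}^2$ together with \eqref{3p3}. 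After cancellation, the drift of $G$ reduces to the non-negative dissipation $\E[\|\alpha\Delta u+\beta|u|^{m-1}u\|_{\mathrm{L}^2}^2]$ plus further non-negative contributions, yielding $G'(t)\geq 0$.

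\textbf{Step 3 --- ODE blow-up.} Rearranging $G(t)\geq G(0)$ gives $\alpha\E[\|\nabla u\|_{\mathrm{L}^2}^2]\leq\tfrac{2\beta}{m+1}\E[\|u\|_{\mathrm{L}^{m+1}}^{m+1}]+\tfrac{2\kappa}{m+1}M(t)-2G(0)$; substituting this bound for $-2\alpha\E[\|\nabla u\|_{\mathrm{L}^2}^2]$ in Step 1 and simplifying produces
\begin{align*}
M'(t)\geq\tfrac{2(m-1)\beta}{m+1}\E[\|u\|_{\mathrm{L}^{m+1}}^{m+1}]+\tfrac{2(m-1)\kappa}{m+1}M(t)+4G(0).
\end{align*}
Combining the bounded-domain embedding $\|u\|_{\mathrm{L}^{m+1}}^{m+1}\geq c_{\mathcal{O}}\|u\|_{\mathrm{L}^2}^{m+1}$ with Jensen's inequality $\E[\|u\|_{\mathrm{L}^2}^{m+1}]\geq M(t)^{(m+1)/2}$ (valid since $x\mapsto x^{(m+1)/2}$ is convex for $m>1$) then produces $M'(t)\geq c\,M(t)^{(m+1)/2}$ for a constant $c>0$. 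A standard ODE comparison gives $M(t)\to\infty$ by some finite $T^*\leq\tfrac{2}{c(m-1)M(0)^{(m-1)/2}}$, establishing mean-square blow-up. The most delicate step is Step 2, specifically absorbing the residual term $-\alpha\kappa\tfrac{m+3}{m+1}\E[\|\nabla u\|_{\mathrm{L}^2}^2]$ generated by the multiplicative L\'evy It\^o corrections against $\E[\|\alpha\Delta u+\beta|u|^{m-1}u\|_{\mathrm{L}^2}^2]$; this is precisely where the structural constraint \eqref{3p3} is essential and where the argument departs from the simpler additive case of Theorem \ref{thm2.1} and of \cite{GLJW}.
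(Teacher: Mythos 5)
Your Steps 1 and 2 reproduce, in substance, the computations in the paper's own proof: the mean-square identity with combined It\^o correction $2\kappa\,\E[\|u\|_{\mathrm{L}^2}^2]$ is exactly \eqref{31}, and the monotonicity of your $G$ (the paper's $\widetilde{J}$) is exactly \eqref{311}, obtained from the three It\^o formulas, the Taylor/convexity bound $(1+\eta)^{m+1}-1-(m+1)\eta\geq\tfrac{m(m+1)}{2}\eta^2$ for $\eta\geq 0$, and the Poincar\'e inequality together with \eqref{3p3}. Where you genuinely depart from the paper is Step 3: the paper feeds $\widetilde{J}(t)\geq\widetilde{J}(0)>0$ into the concavity functional $I(t)=\int_0^t\E[\|u(s)\|_{\mathrm{L}^2}^2]ds+K$ and shows $I''(t)I(t)-(1+\delta)(I'(t))^2>0$ for $\varepsilon=\tfrac{m-1}{4}$, $\delta=\tfrac{m-1}{2(m+3)}$ and $K$ large, so that $I^{-\delta}$ is positive, decreasing and concave and hence $I$ explodes; you instead convert $G(t)\geq G(0)$ into the first-order inequality $M'(t)\geq c\,M(t)^{(m+1)/2}$ via the reverse H\"older inequality $\|u\|_{\mathrm{L}^{m+1}}^{m+1}\geq c_{\mathcal{O}}\|u\|_{\mathrm{L}^2}^{m+1}$ on the bounded domain and Jensen's inequality. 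Your route is shorter, avoids the $(\varepsilon,\delta,K)$ bookkeeping and the Cauchy--Schwarz manipulation of $(I'(t))^2$, and yields an explicit upper bound on the blow-up time; the price is the reliance on the $\mathrm{L}^{m+1}\hookrightarrow\mathrm{L}^2$ reverse bound, which is harmless here but is precisely the kind of step the concavity method is designed to bypass. Both arguments are valid, and both ultimately rest on the same energy monotonicity.

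One correction to your closing remark: the residual produced by assembling the drift of $G$ is not $-\alpha\kappa\tfrac{m+3}{m+1}\E[\|\nabla u\|_{\mathrm{L}^2}^2]$, and no absorption against $\E[\|\alpha\Delta u+\beta|u|^{m-1}u\|_{\mathrm{L}^2}^2]$ is needed --- nor would it be straightforward, since that square contains the negative cross term $-2\alpha\beta m(|u|^{m-1},|\nabla u|^2)$ and therefore does not dominate $\alpha^2\|\Delta u\|_{\mathrm{L}^2}^2$. As in \eqref{311}, the correct residual is $\tfrac{m-1}{m+1}\kappa\alpha\,\E[\|\nabla u\|_{\mathrm{L}^2}^2]-\tfrac{m-1}{m+1}\kappa^2\,\E[\|u\|_{\mathrm{L}^2}^2]$ plus a nonnegative multiple of $\E[\|u\|_{\mathrm{L}^{m+1}}^{m+1}]$, and this is nonnegative by Poincar\'e and $0\leq\kappa\leq\alpha\lambda_1$ alone. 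So your conclusion $G'(t)\geq 0$ is correct, but the step you flagged as delicate is not the one that actually occurs.
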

We prove Theorem \ref{thm2.1} and Theorem \ref{thm3.1} in Section \ref{sec2} and Section \ref{sec3}, respectively. 


\section{Proof of Theorem \ref{thm2.1}}\label{sec2}\setcounter{equation}{0}
In this section, we prove Theorem \ref{thm2.1} by using the method developed in \cite{GLJW}.

\begin{proof}[Proof of Theorem \ref{thm2.1}]
	We prove the result through a contradiction argument. Let us assume that there exists a global solution $u$ such that
	\begin{equation}\label{2p3}
	\sup_{t\in[0,T]}\mathbb{E}\left[\|u(t)\|_{\mathrm{L}^2}^2\right]=	\sup_{t\in[0,T]}\mathbb{E}\left[\int_{\mathcal{O}}u^2(x,t)dx\right]<\infty, 
	\end{equation}
	for any $T>0$. Applying the infinite dimensional It\^o formula to the process $\|u(\cdot)\|_{\mathrm{L}^2}^2$ (\cite[Theorem 4.32]{GDJZ}, \cite[Proposition 1, Chapter 2]{MM}, \cite[Theorem B.2]{JZZB}), we obtain,  $\mathbb{P}$-a.s.,
	\begin{align}\label{13}
		\|u(t)\|_{\mathrm{L}^2}^2&=\|u_0\|_{\mathrm{L}^2}^2-2\alpha\int_0^t\|\nabla u(s)\|_{\mathrm{L}^2}^2ds+2\beta\int_0^t\|u(s)\|_{\mathrm{L}^{m+1}}^{m+1}ds\nonumber\\&\quad+2\int_0^t(\sigma(s),u(s))dW(s)+\int_0^t\|\sigma(s)\|_{\mathrm{L}^2}^2ds\nonumber\\&\quad+\int_0^t\int_{\mathrm{Z}}\left[\|u(s-)+\eta(s,z)\|_{\mathrm{L}^2}^2-\|u(s-)\|_{\mathrm{L}^2}^2\right]\widetilde{\pi}(ds,dz)\nonumber\\&\quad+\int_0^t\int_{\mathrm{Z}}\left[\|u(s)+\eta(s,z)\|_{\mathrm{L}^2}^2-\|u(s)\|_{\mathrm{L}^2}^2-2(\eta(s,z),u(s))\right]\lambda(dz)ds,
	\end{align}
for all $t\in[0,T]$. 	Taking expectation on both sides of \eqref{13} and then using the fact that the fourth and sixth terms appearing in the right hand side of \eqref{13} are martingales, we find for all $t\in[0,T]$,
	\begin{align}\label{14}
	\mathbb{E}\left[\|u(t)\|_{\mathrm{L}^2}^2\right]&=\mathbb{E}\left[\|u_0\|_{\mathrm{L}^2}^2\right]-2\alpha\int_0^t\mathbb{E}\left[\|\nabla u(s)\|_{\mathrm{L}^2}^2\right]ds+2\beta\int_0^t\mathbb{E}\left[\|u(s)\|_{\mathrm{L}^{m+1}}^{m+1}\right]ds\nonumber\\&\quad+\int_0^t\mathbb{E}\left[\|\sigma(s)\|_{\mathrm{L}^2}^2\right]ds+\int_0^t\int_{\mathrm{Z}}\mathbb{E}\left[\|\eta(s,z)\|_{\mathrm{L}^2}^2\right]\lambda(dz)ds. 
\end{align}
Let us set 
\begin{align*}
	v(t)&=\mathbb{E}\left[\|u(t)\|_{\mathrm{L}^2}^2\right], \\  h(t)&=\mathbb{E}\left[-2\alpha\|\nabla u(t)\|_{\mathrm{L}^2}^2+2\beta\|u(t)\|_{\mathrm{L}^{m+1}}^{m+1}+\|\sigma(t)\|_{\mathrm{L}^2}^2+\int_{\mathrm{Z}}\|\eta(t,z)\|_{\mathrm{L}^2}^2\lambda(dz)\right],
\end{align*}
so that \eqref{14} reduces to 
\begin{align*}
	v(t)-v(0)=\int_0^th(s)ds. 
\end{align*}
Next, we define 
\begin{align}\label{18}
	I(t)=\int_0^tv(s)ds+K, \ \mbox{ where $K$ is a positive constant.} 
\end{align}
Therefore, it is immediate that $I'(t)=v(t)$ and $I''(t)=v'(t)=h(t)$. Let us now set 
\begin{align}\label{19}
	J(t)=\mathbb{E}\left[-\frac{\alpha}{2}\|\nabla u(t)\|_{\mathrm{L}^2}^2+\frac{\beta}{m+1}\|u(t)\|_{\mathrm{L}^{m+1}}^{m+1}\right],
\end{align}
so that 
\begin{align}\label{110}
	I''(t)&=h(t)=\mathbb{E}\left[-2\alpha\|\nabla u(t)\|_{\mathrm{L}^2}^2+2\beta\|u(t)\|_{\mathrm{L}^{m+1}}^{m+1}+\|\sigma(t)\|_{\mathrm{L}^2}^2+\int_{\mathrm{Z}}\|\eta(t,z)\|_{\mathrm{L}^2}^2\lambda(dz)\right]\nonumber\\&\geq 2(m+1) \mathbb{E}\left[-\frac{\alpha}{m+1}\|\nabla u(s)\|_{\mathrm{L}^2}^2+\frac{\beta}{m+1}\|u(s)\|_{\mathrm{L}^{m+1}}^{m+1}\right]\nonumber\\&\geq 2(m+1)J(t). 
\end{align}
Applying the infinite dimensional It\^o formula to the process $\|\nabla u(\cdot)\|_{\mathrm{L}^2}^{2}$, we find, $\mathbb{P}$-a.s.,
\begin{align}\label{28}
	\|\nabla u(t)\|_{\mathrm{L}^2}^2&=\|\nabla u_0\|_{\mathrm{L}^2}^2-2\int_0^t(\Delta u(s),\alpha\Delta u(s)+\beta|u(s)|^{m-1}u(s))ds\nonumber\\&\quad+2\int_0^t(\nabla u(s),\nabla\sigma(s))dW(s)+\int_0^t\|\nabla \sigma(s)\|_{\mathrm{L}^2}^2ds \nonumber\\&\quad+\int_0^t\int_{\mathrm{Z}}\left[\|\nabla(u(s-)+\eta(s,z))\|_{\mathrm{L}^2}^2-\|\nabla u(s-)\|_{\mathrm{L}^2}^2\right]\widetilde{\pi}(ds,dz)\nonumber\\&\quad+\int_0^t\int_{\mathrm{Z}}\left[\|\nabla(u(s)+\eta(s,z))\|_{\mathrm{L}^2}^2-\|\nabla u(s)\|_{\mathrm{L}^2}^2-2(\nabla u(s),\nabla\eta(s,z))\right]\lambda(dz)ds,
\end{align}
for all $t\in[0,T]$. Taking expectation on both sides of the  inequality \eqref{28} and then using the fact that the third and fifth terms appearing in the right hand side of the equality \eqref{28}  are martingales, we deduce 
\begin{align}\label{112}
	\mathbb{E}\left[\|\nabla u(t)\|_{\mathrm{L}^2}^2\right]&=\mathbb{E}\left[\|\nabla u_0\|_{\mathrm{L}^2}^2\right]-2\mathbb{E}\left[\int_0^t(\Delta u(s),\alpha\Delta u(s)+\beta|u(s)|^{m-1}u(s))ds\right]\nonumber\\&\quad+\mathbb{E}\left[\int_0^t\|\nabla \sigma(s)\|_{\mathrm{L}^2}^2ds\right] +\mathbb{E}\left[\int_0^t\int_{\mathrm{Z}}\|\nabla\eta(s,z)\|_{\mathrm{L}^2}^2\lambda(dz)ds\right].
\end{align}
Once again applying  the infinite dimensional It\^o formula to the process $\|u(\cdot)\|_{\mathrm{L}^{m+1}}^{m+1}$, we find, $\mathbb{P}$-a.s.,
\begin{align}\label{113}
	&\|u(t)\|_{\mathrm{L}^{m+1}}^{m+1}\nonumber\\&=\|u_0\|_{\mathrm{L}^{m+1}}^{m+1}+(m+1)\int_0^t(|u(s)|^{m-1}u(s),\alpha\Delta u(s)+\beta|u(s)|^{m-1}u(s))ds\nonumber\\&\quad+(m+1)\int_0^t(|u(s)|^{m-1}u(s),\sigma(s))dW(s)+\frac{m(m+1)}{2}\int_0^t(|u(s)|^{m-1},|\sigma(s)|^2)ds\nonumber\\&\quad+\int_0^t\int_{\mathrm{Z}}\left[\|u(s-)+\eta(s)\|_{\mathrm{L}^{m+1}}^{m+1}-\|u(s-)\|_{\mathrm{L}^{m+1}}^{m+1}\right]\widetilde{\pi}(ds,dz)\nonumber\\&\quad+\int_0^t\int_{\mathrm{Z}}\left[\|u(s)+\eta(s,z)\|_{\mathrm{L}^{m+1}}^{m+1}-\|u(s)\|_{\mathrm{L}^{m+1}}^{m+1}-(m+1)(|u(s)|^{m-1}u(s),\eta(s,z))\right]\lambda(dz)ds, 
\end{align}
for all $t\in[0,T]$. Using Taylor's formula (see \cite[Theorem 7.9-1 (c)]{CI2013}), one can find  $0\leq \theta\leq 1$ such that 
\begin{align}\label{114}
	&\|u+\eta(\cdot)\|_{\mathrm{L}^{m+1}}^{m+1}-\|u\|_{\mathrm{L}^{m+1}}^{m+1}-(m+1)(|u|^{m-1}u,\eta(\cdot))\nonumber\\&= \frac{m(m+1)}{2}(|u+\theta \eta(\cdot)|^{m-1},|\eta(\cdot)|^2).
\end{align}
Using \eqref{114} the fact that the third and fifth terms appearing in the right hand side of the equality \eqref{113} are martingales, we further deduce 
\begin{align}\label{115}
	\mathbb{E}\left[\|u(t)\|_{\mathrm{L}^{m+1}}^{m+1}\right]&=\mathbb{E}\left[\|u_0\|_{\mathrm{L}^{m+1}}^{m+1}\right]+(m+1)\E\left[\int_0^t(|u(s)|^{m-1}u(s),\alpha\Delta u(s)+\beta|u(s)|^{m-1}u(s))ds\right]\nonumber\\&\quad+\frac{m(m+1)}{2}\mathbb{E}\left[\int_0^t(|u(s)|^{m-1},|\sigma(s)|^2)ds\right]\nonumber\\&\quad+\frac{m(m+1)}{2}\mathbb{E}\left[\int_0^t\int_{\mathrm{Z}}(|u(s)+\theta \eta(s,z)|^{m-1},|\eta(s,z)|^2)\lambda(dz)ds\right], 
\end{align}
for all $t\in[0,T]$. Combining \eqref{112} and \eqref{115} and substituting in \eqref{19}, we immediately have 
\begin{align}\label{216}
	J(t)&=J(0)+\mathbb{E}\left[\int_0^t\|\alpha\Delta u(s)+\beta|u(s)|^{m-1}u(s)\|_{\mathrm{L}^2}^2ds \right] \nonumber\\&\quad-\frac{\alpha}{2}\mathbb{E}\left[\int_0^t\left(\|\nabla \sigma(s)\|_{\mathrm{L}^2}^2 +\int_{\mathrm{Z}}\|\nabla\eta(s,z)\|_{\mathrm{L}^2}^2\lambda(dz)\right)ds\right] +\frac{m\beta}{2}\mathbb{E}\left[\int_0^t(|u(s)|^{m-1},|\sigma(s)|^2)ds\right]\nonumber\\&\quad+\frac{m\beta}{2}\mathbb{E}\left[\int_0^t\int_{\mathrm{Z}}(|u(s)+\theta \eta(s,z)|^{m-1},|\eta(s,z)|^2)\lambda(dz)ds\right], 
\end{align}
for all $t\in[0,T]$. Furthermore, we have  for all $t\in[0,T]$, 
\begin{align*}
	I'(t)&=v(t)
	=v(0)+\int_0^th(s)ds\nonumber\\&=v(0)+\int_0^t\mathbb{E}\left[\|\sigma(s)\|_{\mathrm{L}^2}^2+\int_{\mathrm{Z}}\|\eta(s,z)\|_{\mathrm{L}^2}^2\lambda(dz)\right]ds\nonumber\\&\quad+\int_0^t\mathbb{E}\left[-2\alpha\|\nabla u(s)\|_{\mathrm{L}^2}^2+2\beta\|u(s)\|_{\mathrm{L}^{m+1}}^{m+1}\right]ds\nonumber\\&= v(0)+\int_0^t\mathbb{E}\left[\|\sigma(s)\|_{\mathrm{L}^2}^2+\int_{\mathrm{Z}}\|\eta(s,z)\|_{\mathrm{L}^2}^2\lambda(dz)\right]ds\nonumber\\&\quad+2\int_0^t\mathbb{E}\left[\alpha(\Delta u(s),u(s))+\beta\|u(s)\|_{\mathrm{L}^{m+1}}^{m+1}\right]ds.
\end{align*}.
Therefore,  by using the Cauchy-Schwarz and H\"older's inequalities,  we deduce for any $\varepsilon>0$, 
\begin{align}\label{117}
	(I'(t))^2&=\left\{v(0)+\int_0^t\mathbb{E}\left[\|\sigma(s)\|_{\mathrm{L}^2}^2+\int_{\mathrm{Z}}\|\eta(s,z)\|_{\mathrm{L}^2}^2\lambda(dz)\right]ds\right\}^2\nonumber\\&\quad+4\left\{\int_0^t\mathbb{E}\left[\alpha(\Delta u(s),u(s))+\beta\|u(s)\|_{\mathrm{L}^{m+1}}^{m+1}\right]ds\right\}^2\nonumber\\&\quad+4\left\{v(0)+\int_0^t\mathbb{E}\left[\|\sigma(s)\|_{\mathrm{L}^2}^2+\int_{\mathrm{Z}}\|\eta(s,z)\|_{\mathrm{L}^2}^2\lambda(dz)\right]ds\right\}\nonumber\\&\quad\times\left\{\int_0^t\mathbb{E}\left[\alpha\Delta u(s)u(s)+\beta\|u(s)\|_{\mathrm{L}^{m+1}}^{m+1}\right]ds\right\}\nonumber\\&\leq 4(1+\varepsilon)\left\{\int_0^t\mathbb{E}\left[\alpha(\Delta u(s),u(s))+\beta(|u(s)|^{m-1}u(s),u(s))\right]ds\right\}^2\nonumber\\&\quad+\left(1+\frac{1}{\varepsilon}\right)\left\{v(0)+\int_0^t\mathbb{E}\left[\|\sigma(s)\|_{\mathrm{L}^2}^2+\int_{\mathrm{Z}}\|\eta(s,z)\|_{\mathrm{L}^2}^2\lambda(dz)\right]ds\right\}^2\nonumber\\&\leq 4(1+\varepsilon)\left\{\int_0^t\mathbb{E}\left[\|\alpha\Delta u(s)+\beta|u(s)|^{m-1}u(s)\|_{\mathrm{L}^2}^2\right]ds\right\}\left\{\int_0^t\mathbb{E}\left[\|u(s)\|_{\mathrm{L}^2}^2\right]ds\right\}\nonumber\\&\quad+\left(1+\frac{1}{\varepsilon}\right)\left\{v(0)+\int_0^t\mathbb{E}\left[\|\sigma(s)\|_{\mathrm{L}^2}^2+\int_{\mathrm{Z}}\|\eta(s,z)\|_{\mathrm{L}^2}^2\lambda(dz)\right]ds\right\}^2. 
\end{align}
Combining \eqref{18}, \eqref{110} and \eqref{117}, we obtain 
\begin{align}\label{118}
&	I''(t)I(t)-(1+\delta)(I'(t))^2\nonumber\\&\geq 2(m+1)J(t)-(1+\delta)(I'(t))^2\nonumber\\&\geq 2(m+1)\bigg\{J(0)+\int_0^t\mathbb{E}\left[\|\alpha\Delta u(s)+\beta|u(s)|^{m-1}u(s)\|_{\mathrm{L}^2}^2\right] ds \nonumber\\&\quad-\frac{\alpha}{2}\int_0^t\mathbb{E}\left[\|\nabla \sigma(s)\|_{\mathrm{L}^2}^2 +\int_{\mathrm{Z}}\|\nabla\eta(s,z)\|_{\mathrm{L}^2}^2\lambda(dz)\right]ds +\frac{m\beta}{2}\int_0^t\mathbb{E}\left[(|u(s)|^{m-1},|\sigma(s)|^2)\right]ds\nonumber\\&\quad+\frac{m\beta}{2}\int_0^t\mathbb{E}\left[\int_{\mathrm{Z}}(|u(s)+\theta \eta(s,z)|^{m-1},|\eta(s,z)|^2)\lambda(dz)\right]ds\bigg\}\nonumber\\&\qquad\times\left\{\int_0^t\mathbb{E}\left[\|u(s)\|_{\mathrm{L}^2}^2\right]ds+K\right\}\nonumber\\&\quad -4(1+\varepsilon)(1+\delta)\left\{\int_0^t\mathbb{E}\left[\|\alpha\Delta u(s)+\beta|u(s)|^{m-1}u(s)\|_{\mathrm{L}^2}^2\right]ds\right\}\left\{\int_0^t\mathbb{E}\left[\|u(s)\|_{\mathrm{L}^2}^2\right]ds\right\}\nonumber\\&\quad-\left(1+\frac{1}{\varepsilon}\right)(1+\delta)\left\{v(0)+\int_0^t\mathbb{E}\left[\|\sigma(s)\|_{\mathrm{L}^2}^2+\int_{\mathrm{Z}}\|\eta(s,z)\|_{\mathrm{L}^2}^2\lambda(dz)\right]ds\right\}^2\nonumber\\&\geq \left[2(m+1)-4(1+\varepsilon)(1+\delta)\right]\nonumber\\&\qquad\times\left\{\int_0^t\mathbb{E}\left[\|\alpha\Delta u(s)+\beta|u(s)|^{m-1}u(s)\|_{\mathrm{L}^2}^2\right]ds\right\}\left\{\int_0^t\mathbb{E}\left[\|u(s)\|_{\mathrm{L}^2}^2\right]ds\right\}\nonumber\\&\quad+2(1+m)K\left\{J(0)-\frac{\alpha}{2}\int_0^t\mathbb{E}\left[\|\nabla \sigma(s)\|_{\mathrm{L}^2}^2 +\int_{\mathrm{Z}}\|\nabla\eta(s,z)\|_{\mathrm{L}^2}^2\lambda(dz)\right]ds\right\}\nonumber\\&\quad-\left(1+\frac{1}{\varepsilon}\right)(1+\delta)\left\{v(0)+\int_0^t\mathbb{E}\left[\|\sigma(s)\|_{\mathrm{L}^2}^2+\int_{\mathrm{Z}}\|\eta(s,z)\|_{\mathrm{L}^2}^2\lambda(dz)\right]ds\right\}^2,
	\end{align}
	where the constant $\delta$ will be fixed later. 	Let us choose $\varepsilon=\frac{m-1}{4}$ and $\delta=\frac{m-1}{2(m+3)}$, so that 
	\begin{align}\label{119}
		2(m+1)-4(1+\varepsilon)(1+\delta)=\frac{m-1}{2}>0,
	\end{align}
	for $m>1$. On the other hand, the condition \eqref{12} implies 
	\begin{align}\label{120}
		J(0)-\frac{\alpha}{2}\int_0^t\mathbb{E}\left[\|\nabla \sigma(s)\|_{\mathrm{L}^2}^2 +\int_{\mathrm{Z}}\|\nabla\eta(s,z)\|_{\mathrm{L}^2}^2\lambda(dz)\right]ds>0.
	\end{align}
	Combining \eqref{119} and \eqref{120}, substituting it in \eqref{118} and then choosing $K$ large enough, one can conclude that 
	\begin{align*}
			I''(t)I(t)-(1+\delta)(I'(t))^2>0\Rightarrow \frac{d}{dt}\left(\frac{I'(t)}{I^{1+\delta}(t)}\right)>0\Rightarrow \frac{I'(t)}{I^{1+\delta}(t)}>\frac{I'(0)}{I^{1+\delta}(0)},
	\end{align*}
	for any $t>0$. It follows that $I(t)$ cannot remain finite for all $t$, which leads to a contradiction, and the proof is complete. 
\end{proof}

\section{Proof of Theorem \ref{thm3.1}}\label{sec3}\setcounter{equation}{0}
In this section,  we prove Theorem \ref{thm3.1} by suitably modifying the method developed in \cite{GLJW}.  

\begin{proof}[Proof of Theorem \ref{thm3.1}]
We prove this theorem also by contradiction. Let us assume that \eqref{2p3} holds true for any $T>0$. 	For $\kappa$ defined in \eqref{3p2}, calculations similar to \eqref{14}, \eqref{112} and \eqref{115} yield
	\begin{align}
		\mathbb{E}\left[\|u(t)\|_{\mathrm{L}^2}^2\right]&=\mathbb{E}\left[\|u_0\|_{\mathrm{L}^2}^2\right]-2\alpha\int_0^t\mathbb{E}\left[\|\nabla u(s)\|_{\mathrm{L}^2}^2\right]ds+2\beta\int_0^t\mathbb{E}\left[\|u(s)\|_{\mathrm{L}^{m+1}}^{m+1}\right]ds\nonumber\\&\quad+2\kappa\int_0^t\mathbb{E}\left[\|u(s)\|_{\mathrm{L}^2}^2\right]ds,\label{31}\\&=\mathbb{E}\left[\|u_0\|_{\mathrm{L}^2}^2\right]+2\int_0^t\mathbb{E}\left[(\alpha\Delta u(s)+\beta|u(s)|^{m-1}u(s)+\kappa u(s),u(s))\right]ds,\\
			\mathbb{E}\left[\|\nabla u(t)\|_{\mathrm{L}^2}^2\right]&=\mathbb{E}\left[\|\nabla u_0\|_{\mathrm{L}^2}^2\right]-2\mathbb{E}\left[\int_0^t(\Delta u(s),\alpha\Delta u(s)+\beta|u(s)|^{m-1}u(s))ds\right]\nonumber\\&\quad+2\kappa\mathbb{E}\left[\int_0^t\|\nabla u(s)\|_{\mathrm{L}^2}^2ds\right]\nonumber\\
			&=\mathbb{E}\left[\|\nabla u_0\|_{\mathrm{L}^2}^2\right]-2\mathbb{E}\left[\int_0^t(\Delta u(s),\alpha\Delta u(s)+\beta|u(s)|^{m-1}u(s)+\kappa u(s))ds\right],\\
			\mathbb{E}\left[\|u(t)\|_{\mathrm{L}^{m+1}}^{m+1}\right]&=\mathbb{E}\left[\|u_0\|_{\mathrm{L}^{m+1}}^{m+1}\right]+(m+1)\E\left[\int_0^t(|u(s)|^{m-1}u(s),\alpha\Delta u(s)+\beta|u(s)|^{m-1}u(s))ds\right]\nonumber\\&\quad+\frac{m(m+1)}{2}\sigma^2\mathbb{E}\left[\int_0^t\|u(s)\|_{\mathrm{L}^{m+1}}^{m+1}ds\right]\nonumber\\&\quad+\frac{m(m+1)}{2}\mathbb{E}\left[\int_0^t\|u(s)\|_{\mathrm{L}^{m+1}}^{m+1}\int_{\mathrm{Z}}|1+\theta \eta(z)|^{m-1}|\eta(z)|^2\lambda(dz)ds\right]\nonumber\\&\geq \mathbb{E}\left[\|u_0\|_{\mathrm{L}^{m+1}}^{m+1}\right]+(m+1)\E\left[\int_0^t(|u(s)|^{m-1}u(s),\alpha\Delta u(s)+\beta|u(s)|^{m-1}u(s))ds\right]\nonumber\\&\quad+{m(m+1)}\kappa\mathbb{E}\left[\int_0^t\|u(s)\|_{\mathrm{L}^{m+1}}^{m+1}ds\right] \nonumber\\&=\mathbb{E}\left[\|u_0\|_{\mathrm{L}^{m+1}}^{m+1}\right]+(m+1)\E\bigg[\int_0^t(|u(s)|^{m-1}u(s),\alpha\Delta u(s)+\beta|u(s)|^{m-1}u(s)\nonumber\\&\qquad+\kappa u(s))ds\bigg]+{(m+1)(m-1)}\kappa\mathbb{E}\left[\int_0^t\|u(s)\|_{\mathrm{L}^{m+1}}^{m+1}ds\right] ,
	\end{align}
	for all $t\in[0,T]$. As in the previous theorem, let us set 
	\begin{align*}
		v(t)&=\mathbb{E}\left[\|u(t)\|_{\mathrm{L}^2}^2\right], \\  h(t)&=\mathbb{E}\left[-2\alpha\|\nabla u(t)\|_{\mathrm{L}^2}^2+2\beta\|u(t)\|_{\mathrm{L}^{m+1}}^{m+1}+2\kappa\|u(t)\|_{\mathrm{L}^2}^2\right],
	\end{align*}
	so that \eqref{31} reduces to 
	\begin{align*}
		v(t)-v(0)=\int_0^th(s)ds. 
	\end{align*}
	Next, we define 
	\begin{align}\label{38}
		I(t)=\int_0^tv(s)ds+K, \ \mbox{ where $K$ is a positive constant.} 
	\end{align}
	Therefore, it is immediate that $I'(t)=v(t)$ and $I''(t)=v'(t)=h(t)$.

	Let us now set 
	\begin{align}\label{34}
		\widetilde{J}(t)= \mathbb{E}\left[-\frac{\alpha}{2}\|\nabla u(t)\|_{\mathrm{L}^2}^2+\frac{\beta}{m+1}\|u(t)\|_{\mathrm{L}^{m+1}}^{m+1}+\frac{\kappa}{m+1}\|u(t)\|_{\mathrm{L}^2}^2\right].
	\end{align}
Moreover, we have 
	\begin{align}
		I''(t)&=h(t)=\mathbb{E}\left[-2\alpha\|\nabla u(t)\|_{\mathrm{L}^2}^2+2\beta\|u(t)\|_{\mathrm{L}^{m+1}}^{m+1}+2\kappa\|u(t)\|_{\mathrm{L}^2}^2\right]\nonumber\\&\geq 2(m+1) \mathbb{E}\left[-\frac{\alpha}{m+1}\|\nabla u(s)\|_{\mathrm{L}^2}^2+\frac{\beta}{m+1}\|u(s)\|_{\mathrm{L}^{m+1}}^{m+1}+\frac{\kappa}{m+1}\|u(t)\|_{\mathrm{L}^2}^2\right]\nonumber\\&\geq 2(m+1)\widetilde{J}(t). 
	\end{align}
	A calculation similar to \eqref{216} yields 
	\begin{align}\label{311}
		\widetilde{J}(t)&\geq\widetilde{J}(0)+\int_0^t\mathbb{E}\left[\|\alpha\Delta u(s)+\beta|u(s)|^{m-1}u(s)+\kappa u(s)\|_{\mathrm{L}^2}^2\right] ds \nonumber\\&\quad+\frac{m-1}{m+1}\kappa\alpha\int_0^t\mathbb{E}\left[\|\nabla u(s)\|_{\mathrm{L}^2}^2\right]ds+\left((m-1)\kappa\beta-\frac{m-1}{m+1}\kappa\beta\right)\int_0^t\mathbb{E}\left[\| u(s)\|_{\mathrm{L}^{m+1}}^{m+1}\right]ds\nonumber\\&\quad-\frac{m-1}{m+1}\kappa^2\int_0^t\mathbb{E}\left[\|u(s)\|_{\mathrm{L}^2}^2\right]ds\nonumber\\&\geq \widetilde{J}(0)+\int_0^t\mathbb{E}\left[\|\alpha\Delta u(s)+\beta|u(s)|^{m-1}u(s)+\kappa u(s)\|_{\mathrm{L}^2}^2\right] ds \nonumber\\&\quad+\frac{m-1}{m+1}\kappa\left(\alpha-\frac{\kappa}{\lambda_1}\right)\int_0^t\mathbb{E}\left[\|\nabla u(s)\|_{\mathrm{L}^2}^2\right]ds+\frac{m(m-1)}{m+1}\kappa\beta\int_0^t\mathbb{E}\left[\| u(s)\|_{\mathrm{L}^{m+1}}^{m+1}\right]ds,
		\end{align}
		where  $\lambda_1$ is the first eigenvalue of the Dirichlet Laplacian. Note that 
		\begin{align*}
			I'(t)&=v(0)+2\int_0^t\mathbb{E}\left[\alpha(\Delta u(s),u(s))+\beta\|u(s)\|_{\mathrm{L}^{m+1}}^{m+1}\right]ds+2\kappa\int_0^t\mathbb{E}\left[\|u(s)\|_{\mathrm{L}^2}^2\right]ds,
		\end{align*}
		and 
		\begin{align}\label{312}
			(I'(t))^2&=v^2(0)+4\left\{\int_0^t\mathbb{E}\left[(\alpha\Delta u(s)+\beta|u(s)|^{m-1}u(s)+\kappa u(s),u(s))\right]ds\right\}^2\nonumber\\&\quad+4v(0)\left\{\int_0^t\mathbb{E}\left[(\alpha\Delta u(s)+\beta|u(s)|^{m-1}u(s)+\kappa u(s),u(s))\right]ds\right\}\nonumber\\&\leq 4(1+\varepsilon)\left\{\int_0^t\mathbb{E}\left[\|\alpha\Delta u(s)+\beta|u(s)|^{m-1}u(s)+\kappa u(s)\|_{\mathrm{L}^2}^2\right]ds\right\}\left\{\int_0^t\mathbb{E}\left[\|u(s)\|_{\mathrm{L}^2}^2\right]ds\right\}\nonumber\\&\quad+\left(1+\frac{1}{\varepsilon}\right)v^2(0). 
		\end{align}
		Combining \eqref{38}, \eqref{311} and \eqref{312},  for $0\leq \kappa \leq\alpha\lambda_1$ (see \eqref{3p3}) and $m>1$, we obtain
		\begin{align}\label{313}
			&	I''(t)I(t)-(1+\delta)(I'(t))^2\nonumber\\&\geq 2(m+1)\widetilde{J}(t)-(1+\delta)(I'(t))^2\nonumber\\&\geq 2(m+1)\bigg\{\widetilde{J}(0)+\int_0^t\mathbb{E}\left[\|\alpha\Delta u(s)+\beta|u(s)|^{m-1}u(s)+\kappa u(s)\|_{\mathrm{L}^2}^2\right] ds \nonumber\\&\quad+\frac{m-1}{m+1}\kappa\left(\alpha-\frac{\kappa}{\lambda_1}\right)\int_0^t\mathbb{E}\left[\|\nabla u(s)\|_{\mathrm{L}^2}^2\right]ds+\frac{m(m-1)}{m+1}\kappa\beta\int_0^t\mathbb{E}\left[\| u(s)\|_{\mathrm{L}^{m+1}}^{m+1}\right]ds\bigg\}\nonumber\\&\qquad\times\left\{\int_0^t\mathbb{E}\left[\|u(s)\|_{\mathrm{L}^2}^2\right]ds+K\right\}\nonumber\\&\quad -4(1+\varepsilon)(1+\delta)\left\{\int_0^t\mathbb{E}\left[\|\alpha\Delta u(s)+\beta|u(s)|^{m-1}u(s)+\kappa u(s)\|_{\mathrm{L}^2}^2\right]ds\right\}\left\{\int_0^t\mathbb{E}\left[\|u(s)\|_{\mathrm{L}^2}^2\right]ds\right\}\nonumber\\&\quad-\left(1+\frac{1}{\varepsilon}\right)(1+\delta)v^2(0)\nonumber\\&\geq \left[2(m+1)-4(1+\varepsilon)(1+\delta)\right]\nonumber\\&\qquad\times\left\{\int_0^t\mathbb{E}\left[\|\alpha\Delta u(s)+\beta|u(s)|^{m-1}u(s)+\kappa u(s)\|_{\mathrm{L}^2}^2\right]ds\right\}\left\{\int_0^t\mathbb{E}\left[\|u(s)\|_{\mathrm{L}^2}^2\right]ds\right\}\nonumber\\&\quad +2(m+1)K\widetilde{J}(0)-\left(1+\frac{1}{\varepsilon}\right)(1+\delta)v^2(0),
		\end{align}
		where the constant $\delta$ will be chosen later. Let us fix $\varepsilon=\frac{m-1}{4}$ and $\delta=\frac{m-1}{2(m+3)}$, so that \eqref{119} is satisfied 	for $m>1$. On the other hand, the condition \eqref{3p4} implies $\widetilde{J}(0)>0$. For sufficiently large $K$, one can complete the proof by following the proof of Theorem \ref{thm2.1}. 
\end{proof}

\begin{remark}
	As pointed out in \cite[Remark 2.1]{GLJW}, the main advantage of concavity method is that the positivity of solutions is not used in Theorems \ref{thm2.1} and \ref{thm3.1}. 
\end{remark}

	\medskip\noindent
	{\bf Acknowledgments:} M. T. Mohan would  like to thank the Department of Science and Technology (DST) Science $\&$ Engineering Research Board (SERB), India for a MATRICS grant (MTR/2021/000066). S. Sankar expresses gratitude to Periyar University for providing the University Research Fellowship. This study was completed through a discussion that took place during the visit of Dr. Manil T. Mohan to Periyar University under the Visiting Professor Scheme.


\begin{thebibliography}{99}
		
	
	 \bibitem{DA} D. Applebaum, L\'evy processes and stochastic calculus, Second edition, Cambridge University Press, Cambridge, 2009.
	
		\bibitem{Bao2016} J. Bao, C. Yuan, Blow-up for stochastic reaction-diffusion equations with jumps, \emph{J. Theor. Probab.}, 29(2016) 617-631.
		
	\bibitem{PLC}	P. L. Chow, \emph{Stochastic Partial Differential Equations}, Applied Mathematics and Nonlinear Science Series, Chapman $\&$ Hall, Boca Raton, 2007.
	
		\bibitem{Chow2009} P.L. Chow, Unbounded positive solutions of nonlinear parabolic It\^o equations, \emph{Commun. Stoch. Anal.}, 3(2009) 211-222.
	
	\bibitem{Chow2011} P.L. Chow, Explosive solutions of stochastic reaction-diffusion equations in mean $L^{p}$ -norm, \emph{J. Differential Equations}, 250(2011) 2567-2580.
	
	\bibitem{CI2013}  P. G. Ciarlet, Linear and Nonlinear Functional Analysis with Applications, \emph{Society for Industrial and Applied Mathematics}, USA, 2013.
		
	\bibitem{GDJZ}	  G. Da Prato and J. Zabczyk. \emph{Stochastic Equations in Infinite Dimensions}, Cambridge University Press, Cambridge, Second Edition, 2014.
	
			
	\bibitem{DA1992} G. Da Prato, J. Zabczyk, Nonexplosion, boundedness and ergodicity for stochastic semilinear equations, \emph{J. Differential Equations}, 98(1992) 181-195.


\bibitem{Doz2010} M. Dozzi, J.A. L\'opez-Mimbela, Finite-time blow-up and existence of global positive solutions of a semi-linear spde, \emph{Stoch. Process. Appl.}, 120(2010) 767-776.

\bibitem{liang} F. Liang, Y. Zhang and S. Zhao, Blow-up solution to an abstract non-local stochastic heat equation with L\'evy noise, \emph{ Statistics and Probability Letters}, 206 (2024) 109976.

\bibitem{Hu2018} B. Hu, Blow-Up Theories for Semilinear Parabolic Equations, Lecture Notes in Math., \emph{Springer}, New York, 2018.

\bibitem{li2018} K. Li, Blow-up of solutions for semilinear stochastic delayed reaction–diffusion equations with L\'evy noise, Comput. Math. Appl., 75 (2018) 388–400.

\bibitem{Li2017} K. Li, J. Peng and J. Jia, Explosive solutions of parabolic stochastic partial differential equations with L\'evy noise, \emph{Discrete
	Contin. Dyn. Syst.}, 37(2017) 5105-5125.

\bibitem{Lv2015} G. Lv, J. Duan, Impacts of noise on a class of partial differential equations, \emph{J. Differential Equations}, 258(2015) 2196-2220.
		
	\bibitem{GLJW}	 	G. Lv and J. Wei,   Blow-up of parabolic equations with additive noise,  \emph{Appl. Math. Lett.}, (2021), Paper No. 107475, 5 pp.
	
	
	
		
			\bibitem{MZ1999} R. Manthey and T. Zausinger, Stochastic evolution equations in $L^{2 \nu}_{\rho}$, \emph{Stochastics Stochastics Rep.}, 66(1999) 37-65.
		
	\bibitem{MM}	  M. M\'etivier, \emph{Stochastic Partial Differential Equations in Infinite-dimensional Spaces}, Scuola Normale Superiore, Pisa, 1988.
		
		


	
	
	
	\bibitem{Niu2012} M. Niu, B. Xin, Impacts of Gaussian noises on the blow-up times of nonlinear stochastic partial differential equations, \emph{Nonlinear Anal.}, RWA 13(2012) 1346-1352.
	

	

	 \bibitem{xing} J. Xing and Y. Li, Explosive solutions for stochastic differential equations driven by L\'evy processes, J. Math. Anal. Appl. 454 (2017) 94–105.
			
		\bibitem{JZZB}  J. Zhu, Z. Brzezniak and W. Liu, Maximal inequalities and exponential estimates for stochastic convolutions driven by L\'evy-type processes in Banach spaces with application to stochastic quasi-geostrophic equations, \emph{SIAM J. Math. Anal.}, {\bf 51}(3) (2019), 2121--2167.
		
	
	\end{thebibliography}
\end{document}